\newtheorem{theorem}{Theorem}
\newtheorem{lemma}{Lemma}
\newtheorem{remark}{Remark}
\newcommand{\intd}[1]{\left\lfloor#1\right\rfloor}
\newcommand{\intu}[1]{\left\lceil#1\right\rceil}
\newcommand{\dx}{\,\mathsf{d}x}
\title[]{Rounding the arithmetic mean value\\ of the square roots of the first $n$~integers}
\author{Thomas P. Wihler}
\address{Mathematics Institute, University of Bern, CH-3012 Bern, Switzerland}
\email{wihler@math.unibe.ch}
\begin{document}

\begin{abstract}
In this article we study the arithmetic mean value~$\Sigma(n)$ of the square roots of the first~$n$ integers. For this quantity, we develop an asymptotic expression, and derive a formula for its integer part which has been conjectured recently in the work of M.~Merca. Furthermore, we address the numerical evaluation of~$\Sigma(n)$ for large~$n\gg 1$.
\end{abstract}

\maketitle

The aim of this article is to derive an explicit formula for the integer part of the arithmetic mean value of the square roots of the first~$n$ integers. More precisely, we consider the sequence
\[
\Sigma(n)=\frac{1}{n}\sum_{k=1}^n\sqrt{k},\qquad n\in\mathbb{N},
\]
and show the following identity.

\begin{theorem}\label{thm:main}
For any~$n\in\mathbb{N}$, there holds that
\begin{equation}\label{eq:main}
\intd{\Sigma(n)}=\intd{A(n)},
\end{equation}
where we define the function
\begin{equation}\label{eq:B}
A(x)=\frac{2}{3}\sqrt{x+1}\left(1+\frac{1}{4x}\right),
\end{equation}
for~$x\ge 1$. Here, $\intd{\cdot}$ signifies the integer part of a positive real number.
\end{theorem}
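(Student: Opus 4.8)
The plan is to localise $\Sigma(n)$ to within $O(1/n)$ of $A(n)$ by elementary integral comparisons, and then to exclude an integer from the narrow window between $\Sigma(n)$ and $A(n)$ by means of a divisibility obstruction: the equation $9m^{2}=4n+6$ has no solution in positive integers. For the first task, using only concavity of $x\mapsto\sqrt x$, I would establish
\[
A(n)-\frac{c}{n}\;<\;\Sigma(n)\;<\;A(n),\qquad c:=\frac{\sqrt 2}{6},
\]
for every $n\ge1$. The upper bound comes from the trapezoidal comparison (chords lie below a concave graph), $\sum_{k=1}^{n}\sqrt k\le\int_{1}^{n}\sqrt x\dx+\tfrac12(1+\sqrt n)=\tfrac23 n^{3/2}+\tfrac12\sqrt n-\tfrac16$, and the lower bound from the midpoint comparison (tangents lie above a concave graph), $\sum_{k=1}^{n}\sqrt k\ge\int_{1/2}^{n+1/2}\sqrt x\dx=\tfrac23(n+\tfrac12)^{3/2}-\tfrac{\sqrt2}{6}$. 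Since the squares of $\tfrac23 n^{3/2}+\tfrac12\sqrt n$, of $nA(n)$, and of $\tfrac23(n+\tfrac12)^{3/2}$ are, respectively, $\tfrac1{36}n(4n+3)^{2}$, $\tfrac1{36}(n+1)(4n+1)^{2}$, and $\tfrac1{36}\cdot2(2n+1)^{3}$, the elementary identities
\[
(n+1)(4n+1)^{2}-n(4n+3)^{2}=1,\qquad 2(2n+1)^{3}-(n+1)(4n+1)^{2}=3n+1
\]
yield $\tfrac23 n^{3/2}+\tfrac12\sqrt n<nA(n)<\tfrac23(n+\tfrac12)^{3/2}$; combining this with the two comparisons and dividing by $n$ completes this step.

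For the obstruction, squaring the definition of $A$ gives the exact identity $A(x)^{2}=\tfrac{4x}{9}+\tfrac23+\tfrac1{4x}+\tfrac1{36x^{2}}$. Assume, for contradiction, that $\intd{\Sigma(n)}\ne\intd{A(n)}$ for some $n$. By the sandwich there is a positive integer $m$ with $A(n)-\tfrac cn<\Sigma(n)<m\le A(n)$; squaring (all quantities are positive), substituting the identity for $A(n)^{2}$, and multiplying by $9$ gives
\[
4n+6-\frac{18c\,A(n)}{n}\;<\;9m^{2}\;\le\;4n+6+\frac{9}{4n}+\frac1{4n^{2}}.
\]
Hence $9m^{2}-(4n+6)$ is an integer confined to a window about $0$ of width $O(n^{-1/2})$ (using $A(n)\le\tfrac56\sqrt{n+1}$), so once $n$ is large its only possible value is $0$; concretely, the window lies inside $(-1,1)$ as soon as $n\ge14$. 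But $9m^{2}=4n+6$ is impossible, because $9m^{2}\equiv0$ or $1\pmod 4$ whereas $4n+6\equiv2\pmod 4$. This contradiction settles all $n\ge14$.

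For the finitely many remaining values $1\le n\le13$ one checks \eqref{eq:main} directly from the exact or numerical values of $\Sigma(n)$ and $A(n)$, which completes the proof. I expect the delicate point to be the lower bound in the first step: the midpoint rule is only barely sharp enough, and it is essential that the gap $A(n)-\Sigma(n)$ be genuinely of order $1/n$ rather than $1/\sqrt n$, since the obstruction argument relies on the window around $9m^{2}-(4n+6)$ shrinking to $0$. (Alternatively, a few terms of the Euler--Maclaurin expansion of $\sum_{k=1}^{n}\sqrt k$ supply the same sandwich, with room to spare.)
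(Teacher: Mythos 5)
Your proof is correct, and I verified the arithmetic at the places where it could go wrong: the trapezoid and midpoint comparisons are valid for the concave function $\sqrt{x}$; the three squared quantities and the two polynomial identities are right, so $\tfrac23 n^{3/2}+\tfrac12\sqrt n<nA(n)<\tfrac23(n+\tfrac12)^{3/2}$ holds and the sandwich $A(n)-\tfrac{\sqrt2}{6n}<\Sigma(n)<A(n)-\tfrac{1}{6n}$ follows for all $n\ge1$; the identity $9A(n)^2=4n+6+\tfrac{9}{4n}+\tfrac{1}{4n^2}$ is exact; and with $A(n)\le\tfrac56\sqrt{n+1}$ the lower edge of the window exceeds $-1$ precisely when $2n^2-25n-25>0$, i.e.\ for $n\ge14$, while the upper edge is below $1$ already for $n\ge3$, so the mod-$4$ obstruction $9m^2\not\equiv 4n+6\pmod 4$ finishes all $n\ge14$, and the cases $n\le13$ (where both sides equal $1$ for $n\le7$ and $2$ for $8\le n\le13$) are a quick check. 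Your route is genuinely different from the paper's in both halves. For the sandwich, the paper also uses the trapezium rule but controls its error by a twofold integration by parts against the bubble $1-4(x-k-\nicefrac12)^2$, obtaining the two-sided bound $0<\delta_{1,n}<\nicefrac32$ and hence $A(n)-\tfrac{11}{48n}<\Sigma(n)<A(n)$; your concavity argument is more elementary and gives the marginally weaker constant $\tfrac{\sqrt2}{6}$, which still suffices. For the integer-separation step, the paper proceeds directly rather than by contradiction: its Lemma~3 locates the jump points of $\intd{A}$ at $\alpha(m)=\nicefrac94(m+1)^2-2$ and exploits the fact that the first integer exceeding $\alpha(m)$ is at least $\alpha(m)+\nicefrac34$ --- which is the same arithmetic phenomenon as your observation that $9m^2=4n+6$ is insoluble, just packaged as a statement about where $\alpha(m)$ sits relative to $\mathbb{Z}$ --- to conclude $A(n)-\tfrac{1}{4n}>m+1$ for $n>\alpha(m)$, so that $\Sigma(n)$ and $A(n)$ cross each integer at the same value of $n$. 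The paper's version buys uniformity (no finite verification is needed) and makes the transition points explicit; yours buys elementarity and isolates the congruence obstruction that is the real reason the theorem is true, at the cost of the hand check for $n\le 13$.
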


This result is motivated by the recent work~\cite[see Conjecture~2]{Merca:17}, where Theorem~\ref{thm:main} has been conjectured.

\section{An asymptotic result}

In order to prove Theorem~\ref{thm:main}, we begin by deriving an asymptotic result for the sum of the square roots of the first~$n$ integers. Here, we employ an idea presented in~\cite{Merca:17}, which is based on using the trapezium rule for the numerical approximation of integrals. In this context, we also point to the related work~\cite{Shekatkar:13}, where upper and lower Riemann sums have been applied. In comparison to the analysis pursued in~\cite{Merca:17}, in the current paper, we use a different approach to control the error in the trapezium rule. Thereby, we arrive at a slightly sharper asymptotic representation for large~$n$. Incidentally, an asymptotic representation has been derived already in the early work~\cite{Ramanujan:00}.

\begin{theorem}\label{thm:r2}
For any~$\nu,n\in\mathbb{N}$, with~$\nu< n$, there holds
\[
\sum_{k=\nu}^{n}\sqrt{k}
=nA(n)-\frac23\sqrt{\nu}\left(\nu-\frac34\right)
-\frac{\delta_{\nu,n}}{24},
\]
with
\begin{equation}\label{eq:delta}
\sigma(\nu+2,n+2)
<\delta_{\nu,n}<\sigma(\nu,n),
\end{equation}
where
\[
\sigma(\nu,n)=
\begin{cases}
\nicefrac{3}{2}-n^{-\nicefrac12}&\text{if }\nu=1,\\
(\nu-1)^{-\nicefrac{1}{2}}-n^{-\nicefrac{1}{2}}&\text{if }\nu\ge 2.
\end{cases}
\] 
\end{theorem}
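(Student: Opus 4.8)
The plan is to apply the composite trapezium rule to the integral $\int_\nu^{n+1}\sqrt x\dx$ on the unit mesh $\nu<\nu+1<\dots<n+1$. If $T$ denotes the corresponding trapezium sum, then, since every interior node carries full weight and only the two endpoint values $\sqrt\nu$ and $\sqrt{n+1}$ are halved, a one-line rearrangement gives
\[
\sum_{k=\nu}^{n}\sqrt k=T+\tfrac12\sqrt\nu-\tfrac12\sqrt{n+1}.
\]
I would then record the elementary algebraic identities
\[
nA(n)=\tfrac23(n+1)^{3/2}-\tfrac12\sqrt{n+1},\qquad
\tfrac23\sqrt\nu\left(\nu-\tfrac34\right)=\tfrac23\nu^{3/2}-\tfrac12\sqrt\nu,
\]
together with $\int_\nu^{n+1}\sqrt x\dx=\tfrac23(n+1)^{3/2}-\tfrac23\nu^{3/2}$. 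Substituting $T=\int_\nu^{n+1}\sqrt x\dx-E$, where $E>0$ is the total quadrature error (positive because $\sqrt{\cdot}$ is concave, so the trapezium rule underestimates the integral), these identities make the main terms cancel exactly and identify $\delta_{\nu,n}/24=E$.

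It then remains to estimate $E$. On each subinterval $[k,k+1]$, with $k\ge\nu\ge1$, the function $f(x)=\sqrt x$ is of class $C^2$, and the standard remainder formula for the trapezium rule gives
\[
\int_k^{k+1}\sqrt x\dx-\tfrac12\left(\sqrt k+\sqrt{k+1}\right)=-\tfrac1{12}f''(\xi_k)=\tfrac1{48}\,\xi_k^{-3/2}
\]
for some $\xi_k\in(k,k+1)$; summing over $k=\nu,\dots,n$ yields $\delta_{\nu,n}=\tfrac12\sum_{k=\nu}^{n}\xi_k^{-3/2}$. For the upper bound I would use $\xi_k>k$, hence $\xi_k^{-3/2}<k^{-3/2}<\int_{k-1}^{k}x^{-3/2}\dx$ for $k\ge2$ (and, in the case $\nu=1$, bound the term $k=1$ separately via $\xi_1^{-3/2}<1$); telescoping with $\int x^{-3/2}\dx=-2x^{-1/2}$ then produces exactly $\sigma(\nu,n)$. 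For the lower bound I would instead use $\xi_k<k+1$ together with $(k+1)^{-3/2}>\int_{k+1}^{k+2}x^{-3/2}\dx$, and telescoping now yields $2(\nu+1)^{-1/2}-2(n+2)^{-1/2}$, which is precisely $\sigma(\nu+2,n+2)$. All the inequalities are strict because $x\mapsto x^{-3/2}$ is strictly decreasing and each $\xi_k$ lies strictly in the open interval $(k,k+1)$.

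I expect the genuinely delicate points to be two: first, choosing the shifted interval $[\nu,n+1]$ rather than $[\nu,n]$; and second, spotting the identity $nA(n)=\tfrac23(n+1)^{3/2}-\tfrac12\sqrt{n+1}$. Together these are exactly what force the trapezium main term to coincide with $nA(n)$ on the nose, and thereby package the quadrature error into the stated coefficient $-\delta_{\nu,n}/24$; everything else is a routine comparison of the tail sum $\sum_{k}\xi_k^{-3/2}$ with integrals of $x^{-3/2}$, with the single endpoint term at $k=1$ requiring separate attention.
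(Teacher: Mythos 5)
Your proposal is correct and follows essentially the same route as the paper: the trapezium rule on the shifted interval $[\nu,n+1]$, the identities $nA(n)=\frac23(n+1)^{\nicefrac32}-\frac12\sqrt{n+1}$ and $\frac23\sqrt\nu(\nu-\nicefrac34)=\frac23\nu^{\nicefrac32}-\frac12\sqrt\nu$, and the comparison of $\sum_k k^{-\nicefrac32}$ (resp.\ $\sum_k(k+1)^{-\nicefrac32}$) with $\int_{\nu-1}^{n}x^{-\nicefrac32}\dx$ (resp.\ $\int_{\nu+1}^{n+2}x^{-\nicefrac32}\dx$), including the separate treatment of the $k=1$ term. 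The only difference is in how the local error is bounded: you invoke the classical mean-value form $-\frac1{12}f''(\xi_k)=\frac1{48}\xi_k^{-\nicefrac32}$ of the trapezium remainder, whereas the paper derives the identical two-sided bound $\frac1{48}(k+1)^{-\nicefrac32}<\int_k^{k+1}\left(\sqrt x-\ell(x)\right)\dx<\frac1{48}k^{-\nicefrac32}$ directly by a twofold integration by parts against the kernel $1-4(x-k-\nicefrac12)^2$ -- a cosmetic distinction, since both yield the same estimates.
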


\begin{proof}
Let us consider the function~$f(x)=\sqrt{x}$, for~$x\ge1$. We interpolate it by a piecewise linear function~$\ell$ in the points~$x=1,2,\ldots$, i.e., for any~$k\in\mathbb{N}$ there holds~$\ell(k)=\sqrt{k}$, and~$\ell$ is a linear polynomial on the interval~$[k,k+1]$. We define the remainder term
\[
\widehat{\delta}_{\nu,n}:=\int_{\nu}^{n+1} \left(\sqrt{x}-\ell(x)\right)\dx
=\frac23\left((n+1)^{\nicefrac32}-\nu^{\nicefrac32}\right)
-\int_{\nu}^{n+1}\ell(x)\dx.
\]
Then, we note that
\[
\int_{\nu}^{n+1}\ell(x)\dx
=\frac12\sqrt{\nu}+\sum_{k=\nu+1}^{n}\sqrt{k}+\frac12\sqrt{n+1};
\]
this is the trapezium rule for the numerical integration of~$f$. Therefore,
\begin{align*}
\sum_{k=\nu+1}^{n}\sqrt{k}
&=\frac23\left((n+1)^{\nicefrac32}-\nu^{\nicefrac32}\right)
-\frac12\sqrt{\nu}
-\frac12\sqrt{n+1}
-\widehat{\delta}_{\nu,n}\\
&=\frac23\sqrt{n+1}\left(n+\frac14\right)
-\frac23\sqrt\nu\left(\nu+\frac34\right)
-\widehat{\delta}_{\nu,n}.
\end{align*}
Hence,
\begin{align*}
\sum_{k=\nu}^{n}\sqrt{k}
&=\frac23\sqrt{n+1}\left(n+\frac14\right)
-\frac23\sqrt\nu\left(\nu-\frac34\right)
-\widehat{\delta}_{\nu,n}\\
&=nA(n)-\frac23\sqrt\nu\left(\nu-\frac34\right)
-\widehat{\delta}_{\nu,n}.
\end{align*}
It remains to study the error term~$\widehat{\delta}_{\nu,n}$. For this purpose, applying twofold integration by parts, for~$k\in\mathbb{N}$, we note that
\begin{align*}
\int_{k}^{k+1} \left(\sqrt{x}-\ell(x)\right)\dx
&=-\frac{1}{8}\int_k^{k+1}\left(\sqrt{x}-\ell(x)\right)\frac{\mathsf{d}^2}{\mathsf{d}x^2}\left(1-4(x-k-\nicefrac12)^2\right)\dx\\
&=\frac{1}{32}\int_k^{k+1}x^{-\nicefrac{3}{2}}\left(1-4(x-k-\nicefrac12)^2\right)\dx\\
&<\frac{1}{32}k^{-\nicefrac{3}{2}}\int_k^{k+1}\left(1-4(x-k-\nicefrac12)^2\right)\dx\\
&=\frac{1}{48}k^{-\nicefrac{3}{2}}.
\end{align*}
Thus, if~$\nu\ge2$, we obtain
\begin{align*}
\widehat{\delta}_{\nu,n}
&=\sum_{k=\nu}^n\int_{k}^{k+1} \left(\sqrt{x}-\ell(x)\right)\dx\\
&<\frac{1}{48}\sum_{k=\nu}^nk^{-\nicefrac{3}{2}}\\
&<\frac{1}{48}\int_{\nu-1}^nx^{-\nicefrac{3}{2}}\dx\\
&=\frac{1}{24}\left((\nu-1)^{-\nicefrac{1}{2}}-n^{-\nicefrac{1}{2}}\right).
\end{align*}
Otherwise, if~$\nu=1$, the above bound implies
\begin{align*}
\widehat{\delta}_{1,n}
&<\frac{1}{48}+\widehat{\delta}_{2,n}
%<\frac{1}{48}\left(1+\int_{1}^nx^{-\nicefrac{3}{2}}\dx\right)
<\frac{1}{24}\left(\frac32-n^{-\nicefrac{1}{2}}\right).
\end{align*}
Similarly, we have
\begin{align*}
\widehat{\delta}_{\nu,n}
&>\frac{1}{48}\sum_{k=\nu}^n(k+1)^{-\nicefrac{3}{2}}\\
&>\frac{1}{48}\int_{\nu+1}^{n+2}x^{-\nicefrac{3}{2}}\dx\\
&=\frac{1}{24}\left((\nu+1)^{-\nicefrac{1}{2}}-(n+2)^{-\nicefrac{1}{2}}\right).
\end{align*}
This completes the proof.
\end{proof}

For~$\nu=1$ the above result implies the identity
\begin{equation}\label{eq:nu1}
\Sigma(n)
=A(n)-\frac{1}{6n}-\frac{\delta_{1,n}}{24n},
\end{equation}
which will be crucial in the analysis below. 

\begin{remark}
{\rm
Proceeding in the same way as in the proof of Theorem~\ref{thm:r2}, a formula for the more general case of the arithmetic mean value of the $r$-th roots of the first~$n$ integers, with~$r\ge1$, can be derived: More precisely, for any~$\nu,n\in\mathbb{N}$, with~$\nu< n$, there holds
\[
\sum_{k=\nu}^{n}k^{\nicefrac{1}{r}}
=\frac{r}{r+1}(n+1)^{\nicefrac{1}{r}}\left(n+\frac{1-\nicefrac{1}{r}}{2}\right)
-\frac{r}{r+1}\nu^{\nicefrac{1}{r}}\left(\nu-\frac{1+\nicefrac{1}{r}}{2}\right)
-\frac{\delta_{\nu,n,r}}{12r},
\]
with~$\delta_{\nu,n,1}=0$ (i.e., for~$r=1$), and $\sigma_r(\nu+2,n+2)
<\delta_{\nu,n,r}<\sigma_r(\nu,n)$ for~$r>1$, where
\[
\sigma_r(\nu,n)=
\begin{cases}
2-\nicefrac{1}{r}-n^{-1+\nicefrac{1}{r}}&\text{if }\nu=1,\\
(\nu-1)^{-1+\nicefrac{1}{r}}-n^{-1+\nicefrac{1}{r}}&\text{if }\nu\ge 2.
\end{cases}
\]
} 
\end{remark}

\section{Proof of Theorem~\ref{thm:main}}

The proof of Theorem~\ref{thm:main} is based on the ensuing two auxiliary results. The first lemma provides tight upper and lower bounds on~$A$ from~\eqref{eq:B}. The purpose of the second lemma is to identify any points where the integer part of~$A$ changes.

\begin{lemma}\label{lem:2}
The function~$A$ from~\eqref{eq:B} is strictly monotone increasing for~$x\ge 1$. Furthermore, there hold the bounds
\begin{align}\label{eq:aux29a}
A(x)&<\frac23\sqrt{x+2}&\text{for } x\ge 2,
\intertext{and}
\label{eq:aux29b}
A(x)&>\frac23\sqrt{x+\frac54}+\frac{1}{4x}&\text{for } 
x\ge 6.
\end{align}
\end{lemma}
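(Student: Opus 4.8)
The plan is to prove the three assertions of Lemma~\ref{lem:2} separately, reducing each one to an elementary polynomial inequality. For the monotonicity, I would write $A(x)=\tfrac23\sqrt{x+1}+\tfrac{1}{6x}\sqrt{x+1}$, differentiate, and collect the result over the common denominator $12x^2\sqrt{x+1}$. A short computation yields
\[
A'(x)=\frac{4x^2-x-2}{12x^2\sqrt{x+1}}.
\]
Since $4x^2-x-2$ is increasing on $[1,\infty)$ and equals $1$ at $x=1$, it is positive there, so $A'(x)>0$ for $x\ge1$, which is the claimed strict monotonicity.

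For the upper bound~\eqref{eq:aux29a}, both sides are positive, so the inequality is equivalent to its square, $(x+1)\bigl(1+\tfrac{1}{4x}\bigr)^2<x+2$. Expanding the left-hand side and cancelling the common part $x+\tfrac32$ against $x+2$, this reduces to $\tfrac{9}{16x}+\tfrac{1}{16x^2}<\tfrac12$, i.e.\ to $8x^2-9x-1>0$. The larger root of $8x^2-9x-1$ equals $\tfrac{9+\sqrt{113}}{16}<2$, so the inequality holds for all $x\ge2$.

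For the lower bound~\eqref{eq:aux29b}, I would first move $\tfrac{1}{4x}$ to the left, writing
\[
A(x)-\frac{1}{4x}=\frac23\sqrt{x+1}+\frac{2\sqrt{x+1}-3}{12x},
\]
which is positive for $x\ge6$ because then $2\sqrt{x+1}-3\ge2\sqrt7-3>0$. Instead of squaring directly---which produces an awkward quartic---I would linearise the difference of roots through the conjugate identity $\sqrt{x+1}-\sqrt{x+\tfrac54}=-\tfrac14\bigl(\sqrt{x+1}+\sqrt{x+\tfrac54}\bigr)^{-1}$. This shows that \eqref{eq:aux29b} is equivalent to
\[
\bigl(2\sqrt{x+1}-3\bigr)\Bigl(\sqrt{x+1}+\sqrt{x+\tfrac54}\Bigr)>2x .
\]
Since $\sqrt{x+\tfrac54}>\sqrt{x+1}$ and $2\sqrt{x+1}-3>0$ for $x\ge6$, the left-hand side exceeds $\bigl(2\sqrt{x+1}-3\bigr)\cdot 2\sqrt{x+1}=4(x+1)-6\sqrt{x+1}$, and the remaining estimate $4(x+1)-6\sqrt{x+1}\ge2x$ is, after squaring the equivalent inequality $x+2\ge3\sqrt{x+1}$, the same as $x^2-5x-5\ge0$. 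The larger root of $x^2-5x-5$ equals $\tfrac{5+3\sqrt5}{2}<6$, so this holds for $x\ge6$, closing the chain.

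The computations are all routine; the one place that needs care is the lower bound~\eqref{eq:aux29b}, where a straightforward squaring leads to a quartic that is unpleasant to handle by hand. Using the conjugate to turn $\sqrt{x+1}-\sqrt{x+\tfrac54}$ into a single rational term, together with the crude but sufficient bound $\sqrt{x+\tfrac54}>\sqrt{x+1}$, keeps everything at the level of quadratics. As always one should confirm the sign of both sides before each squaring step, which is immediate on the stated ranges.
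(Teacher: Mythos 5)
Your proposal is correct in all three parts. The monotonicity argument is identical to the paper's (same derivative, same denominator $12x^2\sqrt{x+1}$). For the upper bound~\eqref{eq:aux29a}, you square and reduce to $8x^2-9x-1>0$; the paper instead locates the unique positive intersection point $x^\star=\tfrac{9+\sqrt{113}}{16}<2$ of the two graphs and checks a sample value at $x=2$ — but since $x^\star$ is precisely the larger root of your quadratic, the two arguments are the same computation in different clothing. The real divergence is in the lower bound~\eqref{eq:aux29b}, which the paper dismisses with ``follows from an analogous argument'': a literal analogue would mean comparing $A$ with $\tfrac23\sqrt{x+\tfrac54}+\tfrac1{4x}$ directly, and squaring there produces a quartic. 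Your conjugate identity $\sqrt{x+1}-\sqrt{x+\tfrac54}=-\tfrac14\bigl(\sqrt{x+1}+\sqrt{x+\tfrac54}\bigr)^{-1}$, combined with the crude bound $\sqrt{x+\tfrac54}>\sqrt{x+1}$, keeps everything quadratic and reduces the claim to $x^2-5x-5\ge0$ for $x\ge6$, which I have checked (the larger root is $\tfrac{5+3\sqrt5}{2}\approx5.85<6$). This is a genuinely cleaner route for the part the paper leaves to the reader, at the mild cost of a slightly lossy intermediate estimate; the paper's intersection-point method, by contrast, in principle pins down the exact threshold where the inequality begins to hold. All sign checks before squaring are correctly noted, so there is no gap.
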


\begin{proof}
The strict monotonicity of~$A$ follows directly from the fact that
\[
A'(x)=\frac{4x^2-x-2}{12x^2\sqrt{x+1}}>0,
\]
for any~$x\ge 1$. Furthermore, we notice that the graph of~$A$ and of the function
\[
h(x)=\frac23\sqrt{x+2},
\]
which is the upper bound in~\eqref{eq:aux29a}, have exactly one positive intersection point at~$x^\star=\nicefrac{(9+\sqrt{113})}{16}<2$. Moreover, choosing~$x=2>x^\star$, for instance, we have
\[
A(2)=\frac{3\sqrt3}{4}<\frac43=h(2).
\]
Thus, we conclude that~$A(x)<h(x)$ for any~$x>x^\star$; this yields~\eqref{eq:aux29a}. The lower bound~\eqref{eq:aux29b} follows from an analogous argument.
%\[
%A(x)=\frac23g(x)\sqrt{x+2},\qquad
%\text{with }g(x)=\left(1+\frac{1}{4x}\right)\sqrt{1-\frac{1}{x+2}}.
%\]
%Observing that~$\sqrt{1-y}\le 1-\nicefrac{y}{2}$ for any~$y\in[0,1]$, yields
%\[
%g(x)\le\left(1+\frac{1}{4x}\right)\left(1-\frac{1}{2(x+2)}\right)
%<1+\frac{1}{4x}-\frac{1}{2(x+2)}\le 1,
%\]
%for any~$x\ge 2$. This implies the first bound in~\eqref{eq:aux29a}.
\end{proof}

\begin{lemma}\label{lem:3}
For any~$m\in\mathbb{N}_0$, let $\alpha(m)=\nicefrac94(m+1)^2-2$.
Then, for~$n,m\in\mathbb{N}$, with~$n\le\alpha(m)$, there holds that $A(n)< m+1$. Conversely, we have
$A(n)-\nicefrac{1}{4n}>m+1$, for any integer~$n>\alpha(m)$.
\end{lemma}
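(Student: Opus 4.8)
The plan is to read off both inequalities from the bounds~\eqref{eq:aux29a} and~\eqref{eq:aux29b} of Lemma~\ref{lem:2}, the crucial observation being the algebraic identity $\alpha(m)+2=\nicefrac{9}{4}(m+1)^2$, so that $\nicefrac{2}{3}\sqrt{\alpha(m)+2}=m+1$. For the first assertion, take $n,m\in\mathbb{N}$ with $n\le\alpha(m)$. If $n\ge2$, then~\eqref{eq:aux29a} together with the monotonicity of the square root gives
\[
A(n)<\frac23\sqrt{n+2}\le\frac23\sqrt{\alpha(m)+2}=m+1.
\]
The only case left is $n=1$, which is settled by the explicit value $A(1)=\nicefrac{5\sqrt2}{6}<2\le m+1$, where we use that $m\ge1$.

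For the converse, fix $n,m\in\mathbb{N}$ with $n>\alpha(m)$. Since $m\ge1$, we have $\alpha(m)\ge\alpha(1)=7$, hence $n\ge8>6$, so~\eqref{eq:aux29b} applies and yields $A(n)-\nicefrac{1}{4n}>\nicefrac{2}{3}\sqrt{n+\nicefrac{5}{4}}$. It therefore remains to show $\nicefrac23\sqrt{n+\nicefrac54}\ge m+1$, i.e., $4n+5\ge9(m+1)^2$. The hypothesis $n>\alpha(m)$ gives only $4n>9(m+1)^2-8$, which at first glance looks too weak; but since $4n$ is an integer and $9(m+1)^2\equiv0$ or $1\pmod4$ (because $(m+1)^2\equiv0$ or $1\pmod4$), the strict inequality $4n>9(m+1)^2-8$ in fact forces $4n\ge9(m+1)^2-5$, which is precisely what is required. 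Chaining the two estimates completes the argument.

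I expect the only delicate point to be this last integrality/parity step in the converse direction, namely sharpening $4n>9(m+1)^2-8$ to $4n\ge9(m+1)^2-5$; everything else is a direct application of Lemma~\ref{lem:2} supplemented by the single numerical check $A(1)<2$.
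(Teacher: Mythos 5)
Your proof is correct and follows essentially the same route as the paper: both parts rest on the bounds of Lemma~\ref{lem:2} combined with the identity $\nicefrac23\sqrt{\alpha(m)+2}=m+1$, and your mod-$4$ sharpening of $4n>9(m+1)^2-8$ to $4n\ge 9(m+1)^2-5$ is exactly the paper's observation that $n>\alpha(m)$ forces $n\ge\alpha(m)+\nicefrac34$ (obtained there by an even/odd case split on $m$). The only cosmetic difference is that for the first inequality the paper applies~\eqref{eq:aux29a} at $x=\alpha(m)$ via the monotonicity of $A$, which avoids your separate check of the case $n=1$.
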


\begin{proof}
Consider~$m,n\in\mathbb{N}$ such that~$1\le n\le\alpha(m)$. Applying the monotonicity of~$A$, cf.~Lemma~\ref{lem:2}, together with~\eqref{eq:aux29a}, we infer
\[
A(n)\le A(\alpha(m))<\frac23\sqrt{\alpha(m)+2}=m+1.
\]
Furthermore, if~$m=2s$, with~$s\in\mathbb{N}$, is even, then we have $\alpha(2s)=9s^2+9s+\nicefrac14$; moreover, if~$m=2s-1$, with~$s\in\mathbb{N}$, is odd, then there holds~$\alpha(2s-1)=9s^2-2$. In particular, we conclude that $n\ge \alpha(m)+\nicefrac34$ for any $n\in\mathbb{N}$ with~$n>\alpha(m)$. Then, involving~\eqref{eq:aux29b}, it follows that
\begin{align*}
A(n)-\frac{1}{4n}
>\frac23\sqrt{n+\frac54}
\ge\frac23\sqrt{\alpha(m)+2}
\ge m+1,
\end{align*}
which yields the lemma. 
\end{proof}

\subsection*{Proof of Theorem~\ref{thm:main}}
We are now ready to prove the identity~\eqref{eq:main}. To this end, given~$n\in\mathbb{N}$, we define
\[
m:=\min\{k\in\mathbb{N}:\,\alpha(k)\ge n\}\in\mathbb{N}.
\]
Evidently, there holds $n\le\alpha(m)$ as well as~$n>\alpha(m-1)$.
By virtue of~\eqref{eq:nu1} and due to Lemma~\ref{lem:3}, there holds
\[
1\le\Sigma(n)< A(n)< m+1.
\]
If~$m=1$, the proof of the theorem is complete. Otherwise, if~$m\ge 2$, then by means of Theorem~\ref{thm:r2}, we notice that~$\delta_{1,n}<\nicefrac32$. Then, recalling~\eqref{eq:nu1}, this leads to
\[
\Sigma(n)>A(n)-\frac{1}{6n}-\frac{1}{16n}=A(n)-\frac{11}{48n}.
\]
Hence, upon employing~Lemma~\ref{lem:3} (with~$n>\alpha(m-1)$), it follows that
\[
\Sigma(n)>A(n)-\frac{1}{4n}> m.
\]
Combining the above estimates, we deduce the bounds
\[
m<\Sigma(n)<A(n)<m+1.
\]
This shows~\eqref{eq:main}.

\section{Numerical evaluation of~$\Sigma(n)$}

For large values of~$n$ the straightforward computation of~$\Sigma(n)$, i.e., simply adding the numbers $\sqrt1$, $\sqrt2$, $\sqrt3$,\ldots, $\sqrt{n}$, and dividing by~$n$, is computationally slow and prone to roundoff errors. For this reason, we propose an alternative approach: if~$n\gg 1$, we choose~$\nu\in\mathbb{N}$, with~$\nu+1<n$, of moderate size (so that the numerical evaluation of~$\Sigma(\nu)$ is well-conditioned and accurate). Then, we write
\[
\Sigma(n)=\frac{1}{n}\left(\nu\Sigma(\nu)+\sum_{k=\nu+1}^n\sqrt{k}\right).
\]
Here, employing Theorem~\ref{thm:r2}, we notice that
\[
\sum_{k=\nu+1}^n \sqrt{k}=nA(n)-\nu A(\nu)-\frac{\delta_{\nu+1,n}}{24}.
\]
Hence, upon defining the approximation
\[
\widetilde{\Sigma}(\nu,n):=\frac{1}{n}\left(nA(n)+\nu\Sigma(\nu)-\nu A(\nu)\right),
\]
it follows that
\[
\left|\Sigma(n)-\widetilde\Sigma(\nu,n)\right|\le\frac{\delta_{\nu+1,n}}{24n},
\]
with
\[
\delta_{\nu+1,n}
<\sigma(\nu+1,n)
=\nu^{-\nicefrac{1}{2}}-n^{-\nicefrac{1}{2}}
=n^{-\nicefrac{1}{2}}\left(\left(\frac{\nu}{n}\right)^{-\nicefrac{1}{2}}-1\right);
\]
cf.~\eqref{eq:delta}. In this way, we infer the error estimate
\begin{equation}\label{eq:error}
\left|\Sigma(n)-\widetilde\Sigma(\nu,n)\right|
<\frac{n^{-\nicefrac{3}{2}}}{24}\left(\left(\frac{\nu}{n}\right)^{-\nicefrac12}-1\right).
\end{equation}
In particular, given a prescribed tolerance~$\epsilon>0$, we require
\[
\nu=\intu{n\left(24\epsilon n^{\nicefrac{3}{2}}+1\right)^{-2}},
\]
with~$\nu\le n-2$, in order to deduce the guaranteed bound
\[
\left|\Sigma(n)-\widetilde\Sigma(\nu,n)\right|<\epsilon.
\]
To give an example, we consider~$n=10^7$. Then, choosing~$\nu=100$ leads to the numerical value
\[
\widetilde\Sigma(n)=2108.1852648724285\ldots
\]
In this particular case, the error bound~\eqref{eq:error} gives
\[
\left|\Sigma(n)-\widetilde\Sigma(\nu,n)\right|\le 4.1535\times10^{-10}.
\]
This estimate is fairly sharp; indeed, the true error is approximately~$4.1328\times 10^{-10}$.

\bibliographystyle{amsalpha}
\bibliography{myrefs}

\end{document}